\documentclass[12pt, reqno]{amsart}

\usepackage{amssymb,latexsym,amsmath,amsfonts}
\usepackage{enumitem}
\usepackage{mathrsfs}
\usepackage{graphicx}
\usepackage{hyperref}
\usepackage[usenames]{color}
\usepackage{comment}

%%%%%%%%%%%%%%%%%%%%%%%%%%%%%%TAO

\DeclareFontFamily{OT1}{rsfs}{}
\DeclareFontShape{OT1}{rsfs}{n}{it}{<-> rsfs10}{}
\DeclareMathAlphabet{\mathscr}{OT1}{rsfs}{n}{it}

\addtolength{\textwidth}{3 truecm}
\addtolength{\textheight}{1 truecm}
\setlength{\voffset}{-.6 truecm}
\setlength{\hoffset}{-1.3 truecm}

\numberwithin{equation}{section}

\theoremstyle{definition}
\newtheorem{definition}{Definition}[section]

\theoremstyle{remark}
\newtheorem{remark}[definition]{Remark}

\theoremstyle{plain}
\newtheorem{theorem}[definition]{Theorem}
\newtheorem{result}[definition]{Result}
\newtheorem{lemma}[definition]{Lemma}

\newtheorem{corollary}[definition]{Corollary}

%colors
\definecolor{DPurple}{rgb}{0.76,0.2,0.69}

\usepackage{color}

%characters

%domains&sets
\newcommand{\nice}{\Sigma}
\newcommand{\D}{\mathbb D}

%miscellaneous
\newcommand{\capa}{{c}}
\newcommand{\gr}{{\rm{g}}}
\newcommand{\Gal}{{\rm {Conj}}}
\newcommand{\esc}{{\mathcal E}(\nice; \{\alpha_n\})}
\newcommand{\on}{\gamma}
\newcommand{\away}{\zeta}

%spaces
\newcommand{\sph}{\widehat{\mathbb{C}}}
\newcommand{\C}{\mathbb{C}} 
\newcommand{\R}{\mathbb{R}}
\newcommand{\Z}{\mathbb{Z}}
\newcommand{\N}{\mathbb{N}}
\newcommand{\Q}{\mathbb{Q}}

\begin{document}

\title[Around Fekete's theorem]{Around Fekete's theorem}

\author{Norm Levenberg \and Mayuresh Londhe}
\thanks{Norm Levenberg is supported by Simons Foundation grant No. 707450}
\address{Department of Mathematics, Indiana University, Bloomington, Indiana 47405, USA}
\email{nlevenbe@iu.edu, mmlondhe@iu.edu}

\begin{abstract}
A classical result of Fekete gives necessary conditions on a compact set in the complex plane so that
it contains infinitely many sets of conjugate algebraic integers.
For such sets, we demonstrate the existence of a sequence of algebraic integers such that most of 
their conjugates eventually lie near the set, while maintaining a bound on heights. 
Finally, we examine properties satisfied by the limiting distribution of a sequence of algebraic numbers.
\end{abstract}

\keywords{height bound, capacity, conjugate algebraic integers, Mahler measure}
\subjclass[2020]{Primary: 11G50, 31A15; Secondary:11R06, 30C85}

\maketitle
%\vspace{-0.25cm}

\section{Introduction}\label{S:intro}

Let $\nice$ be a compact set in the complex plane $\C$.
Fekete studied which compact sets $\nice$ contain
only a finite number of sets of conjugate algebraic integers.
This involves the notion of the {\textit{transfinite diameter of}} $\nice$;
it is given by
\[
d(\nice)= \lim_{n \to \infty} \Big(  \sup_{\xi_1, \dots, \xi_n \in \nice} \prod_{i<j} \vert \xi_i- \xi_j \vert^{\frac{2}{n(n-1)}} \Big).
\]
The transfinite diameter of a circle of radius $r$ is equal to $r$, and for 
a line segment of length $l$ it is equal to $l/4$. It turns out that the quantity
$d(\nice)$ coincides with the logarithmic capacity of $\nice$, denoted by $\capa(\nice)$; see 
Subsection~\ref{SS:potential_theory} for the definition. The product $\prod_{i<j} \vert \xi_i- \xi_j|$ 
is related to the resultant of $P$ and its derivative $P'$, where $P(z)=\prod_{j=1}^n (z-\xi_j)$; see (\ref{disc}). 
This motivates the use of potential-theoretic tools in studying sets of conjugate algebraic numbers.
We now state a result by Fekete: 

\begin{result}[Fekete, \cite{Fekete:udvwbgmg23}]\label{R:finite_integers}
Let $\nice \subset \C$ be compact and symmetric about the real axis. 
If $\capa(\nice) <1$, then there exists a neighborhood $U$ of $\nice$
such that there are only a finite number of %finitely many 
algebraic integers $\alpha$ so that $\alpha$ and all its conjugates lie in $U$.
\end{result}

{\noindent The above result is not true if we replace algebraic integers 
by algebraic numbers in the statement,
for example, if $\nice=\{|z|=1/2\}$ then all roots of $2^nz^n-1$ are in $\nice$.
In \cite{FekeSzego:aeicwr55}, Fekete, together with Szeg\"{o} showed that the converse 
of Result~\ref{R:finite_integers} is also true.%
\smallskip

Our main results involve a notion of height 
$h_{\nice} ({\alpha})$ 
of an algebraic number $\alpha$ relative 
to a compact set $\nice$ as defined in \cite{rumely:bilu99}.
Let $\overline \Q$ denote the set of algebraic numbers in $\C$. 
Given a nonzero $\alpha \in \overline \Q$, there is a polynomial $P_{\alpha}$
with integer coefficients and leading coefficient $b_{\alpha}$ and degree $\deg \alpha$ 
that is irreducible over the integers and satisfies $P_{\alpha}(\alpha)=0$
($P_{\alpha}$ is unique up to the sign of $b_{\alpha}$). For an algebraic integer $\alpha$, $b_{\alpha}=1$ or $-1$. 
We denote by $\Gal (\alpha)$ the set of all conjugates of $\alpha$ which is same as the set
of all roots of $P_{\alpha}$ in $\C$.
Recall that the (absolute logarithmic) Weil height of $\alpha \in \overline \Q$ is given by
\[
h({\alpha}) = \frac{1}{\deg \alpha} \Big(\log |b_{\alpha}|+\sum_{x \in \Gal (\alpha)}\max \{0, \log|x|\}\Big).
\]
%where $b_{\alpha}$ is the leading coefficient of $P_{\alpha}$ and $\deg \alpha$ is the degree of $P_{\alpha}$.
We refer the reader to \cite{BomGub:hidg06} for an introduction to the theory of heights.
Note that $\log^+|z|:= \max \{0, \log|z|\}$ is the Green's function of the unit circle centered at 0
with pole at $\infty$.
For an arbitary compact set $\nice$ with positive capacity,
the {\textit{height of $\alpha \in \overline \Q$ with respect to}} $\nice$ is defined as
\begin{equation}\label{height}
h_{\nice} ({\alpha}) =  \frac{1}{\deg\alpha} \Big(\log |b_{\alpha}|+\sum_{x \in \Gal (\alpha)}\gr_{\nice} (x, \infty)\Big),
\end{equation}
where $\gr_{\nice} (z, \infty)$ denotes the Green's function of $\nice$ with pole at $\infty$.
Loosely speaking, $h_{\nice} ({\alpha})$ quantifies the distances of conjugates of $\alpha$ from $\nice$. 
See Section~\ref{S:prelim} for more details about the Green's function and height $h_{\nice}$.
Also, see \cite{rumely:bilu99} and \cite{Pritsker:crelle11} for an application of this height
to limit distributions of sets of conjugate algebraic numbers.%
\smallskip

In the case $\capa(\nice) \geq 1$ and $\nice$ is symmetric about the real axis,
it follows from the Fekete-Szeg\"o result in \cite{FekeSzego:aeicwr55} that
there exists a sequence $\{\alpha_n\}$ in $\overline \Q$ satisfying
$h_{\nice} (\alpha_n) \to 0$. 
\begin{result}\label{R:lower}
If $0 <\capa(\nice) < 1$, 
then for every sequence $\{\alpha_n\}$ in $\overline \Q$
such that $\deg \alpha_n \to \infty$, we have
\begin{equation}\label{E:lower}
\liminf_{n \to \infty} h_{\nice} (\alpha_n) \geq \frac{1}{2} \log \frac{1}{\capa(\nice)}.
\end{equation}
\end{result}
\noindent This follows for regular $\nice$ from \cite[Th\'eor\`eme~1]{FRL:eqpphd06}. We also give an independent 
proof of \eqref{E:lower} for general sets using Theorem~\ref{T:limiting} below.
The Northcott property of $h_{\nice}$ gives us
the following formulation which can be seen as an effective version of Result~\ref{R:finite_integers}:}
for any $\epsilon >0$
there are only a finite number of algebraic {\textbf{numbers}} $\alpha$ such that
\[
h_{\nice} (\alpha) \leq \frac{1}{2} \log \frac{1}{\capa(\nice)} - \epsilon.
\]
%Given a compact set $\nice \subset \C$ with $0 <\capa(\nice) < 1$, 
However, it is not clear if there
exists a sequence $\{\alpha_n\}$ in $\overline \Q$ satisfying
$h_{\nice} (\alpha_n) \to  -(1/2) \log {\capa(\nice)}$. We show that there exists a sequence $\{\alpha_n\}$ of algebraic integers such that
$h_{\nice} (\alpha_n) \to  - \log {\capa(\nice)}$, and, in addition, 
``most'' conjugates of $\alpha_n$ are eventually near $\nice$.
This latter property can be achieved without the condition on height, 
for instance, utilizing a result of Motzkin \cite{Mo47}.
%we show that this can also be achieved while keeping the heights bounded.

\begin{theorem}\label{T:existence}
Let $\nice \subset \C$ be compact and symmetric about the real axis. If $0< \capa(\nice) <1$,
then there exists a sequence $\{\alpha_n\}$ of 
distinct algebraic integers such that
$h_{\nice}(\alpha_n) \to - \log {\capa(\nice)}$ as $n \to \infty$,
and for any neighborhood $U$ of $\nice$, we have
\begin{equation}\label{E:most}
\frac{\sharp \{ \Gal (\alpha_n) \cap U\}}{\deg \alpha_n} \longrightarrow 1 
\quad {\text as } \quad n \longrightarrow \infty,
\end{equation}
where $\sharp \{\Gal (\alpha_n) \cap U\}$ denotes the number of conjugates 
of $\alpha_n$ contained in $U$.
\end{theorem}

Owing to Result~\ref{R:finite_integers}, $\deg{\alpha_n} \to \infty$ in the above theorem.
We briefly outline the idea of the proof of  Theorem~\ref{T:existence}. 
We construct a sequence of auxiliary sets $E_n$ so that
$\capa(\nice \cup E_n)=1$. The set $E_n$ is chosen to be a subset of the level set
$\gr_{\nice} (z, \infty)=n$ and symmetric about the real axis. Let $\nu_n$ be the equilibrium measure 
of $\nice \cup E_n$. 
We solve a Dirichlet problem to determine how much mass $\nu_n$ puts on the set
$E_n$. Using this information in the definition of height together with the Fekete-Szeg\"o result 
in \cite{FekeSzego:aeicwr55}, we construct the appropriate sequences $\{\alpha_n\}$.
\smallskip

In fact, we can find a sequence $\{\alpha_n\}$ as in Theorem~\ref{T:existence} such that
the limiting distribution of the probability measures uniformly distributed on all conjugates of $\alpha_n$ 
is given by the equilibrium measure of $\nice$ (see Subsection~\ref{SS:potential_theory} for
the definition; also see Remark~\ref{rmk}). However, for a sequence $\{\alpha_n\}$ satisfying the hypothesis
of Theorem~\ref{T:existence}, we do not yet know whether the limiting distribution is given by the 
equilibrium measure of $\nice$. Therefore, we now explore the properties satisfied by the limiting measures.%
\smallskip

Let $\nice \subset \C$ be compact and symmetric about the real axis, and assume that
$\nice$ is non-polar.
Let $\{\alpha_n\}$ be a sequence
in $\overline \Q$ for which $\{h_{\nice} (\alpha_n)\}$ is bounded. 
Let $\mu_n$ be the probability measure uniformly distributed over all conjugates of $\alpha_n$, i.e.,
\[
\mu_n:= \frac{1}{\deg \alpha_n} \sum_{x \in \Gal (\alpha_n)} \delta_x.
\]
It follows from Lemma \ref{L:tight} that the sequence $\{\mu_n\}$ has a convergent subsequence. Although a limiting measure 
$\mu$ need not be compactly supported, it turns out that such a 
$\mu$ has finite logarithmic energy $I(\mu)$ and satisfies
$\int \log^+ |z|\, d\mu < \infty.$
This allows us to use potential-theoretic methods.
After passing to a subsequence, we can assume that the sequences $\{h_{\nice} (\alpha_n)\}$
and $\{{\log |b_{n}|}/{\deg \alpha_{n}}\}$ are convergent,
where $b_{n}:= b_{\alpha_n}$.
For such a sequence,
 $\esc$ is the escape rate as defined in (\ref{E:escape_rate}). 
Refining the argument in Serre \cite{Serre}, we show that
$\mu$ satisfies the following arithmetic property: 

\begin{theorem}\label{T:limiting}
For any polynomial $Q \in \Z[z]$ with integer coefficients, we have
\[
\frac{1}{\deg Q} \int \log \vert Q(z) \vert \,d\mu(z) \geq -  \lim_{n \to \infty} \frac{\log |b_n|}{\deg \alpha_n} - \esc.
\]
Furthermore, we have the following bound on the logarithmic energy of $\mu$:
\[
I(\mu) \leq 2 \Big(\lim_{n \to \infty} \frac{\log |b_n|}{\deg \alpha_n} + \esc \Big).
\]
\end{theorem}

\noindent We use a convexity 
property of the logarithmic energy together with Theorem~\ref{T:limiting} to show \eqref{E:lower}.
\medskip

{\noindent{\bf Remarks on $\capa(\nice) \geq 1$ case:}}
As noted earlier,
in \cite{FekeSzego:aeicwr55}, Fekete--Szeg\"{o} showed the converse 
of Result~\ref{R:finite_integers}, i.e., if $\capa(\nice) \geq 1$, then every neighborhood $U$ of $\nice$ 
contains infinitely many 
algebraic integers $\alpha$ so that $\alpha$ and all its conjugates lie in $U$. 
By shrinking $U$, we can get a sequence $\{\alpha_n\}$ of algebraic integers 
such that the sets of conjugates of $\alpha_n$ are eventually contained in any
neighborhood of $\nice$. Bilu \cite{Bilu:limit97} studied limit distributions of the measures $\mu_n$ associated to 
such sequences of algebraic integers, 
more generally, sequences $\{\alpha_n\}$ in $\overline \Q$ satisfying
$h(\alpha_n) \to 0$,
in the case of the unit circle centered at 0.
Using potential-theoretic methods, limit distributions of $\mu_n$ associated to 
sequences $\{\alpha_n\}$ in $\overline \Q$ satisfying
$h_{\nice} (\alpha_n) \to 0$
have been studied for general sets $\nice$ of logarithmic capacity 1 
by Rumely \cite{rumely:bilu99} and Pritsker \cite{Pritsker:crelle11}.
For such sequences $\{\alpha_n\}$ in $\overline \Q$, they showed the 
limiting distribution of $\mu_n$ is given by the equilibrium measure of $\nice$.
Recently potential-theoretic methods
have been employed when $\capa(\nice) >1$ in \cite{Smith:aicpd21} to study limit distributions in connection 
with the Schur--Siegel--Smyth trace problem (see also \cite{OrlSar:ldcai23}).
\medskip

We include the necessary potential-theoretic background in the next section, including a non-standard 
result on logarithmic potentials of certain measures with unbounded support. Sections 3 and 4 give the 
proofs of Theorems~\ref{T:existence} and~\ref{T:limiting}. 
We conclude in Section~\ref{S:remarks} with some further questions.
\medskip

\section{Preliminaries}\label{S:prelim}

\subsection{Potential theory}\label{SS:potential_theory}
We work exclusively in the complex plane $\C$.
The term measure will refer to a positive Borel measure in $\C$ unless otherwise stated. 
Given such a measure $\mu$, we can form its logarithmic potential function
\[
U_\mu (z) := \int \log \frac{1}{\lvert z-w \rvert}\,d\mu(w) 
\]
provided this integral exists. In this case, if $\mu$ has compact support, then 
\begin{equation}\label{potfcn} 
U_\mu (z) =\mu(\C)(-\log |z|+O(1)) \ \hbox{as} \ z\to \infty.
\end{equation}
This is a lower semicontinuous, superharmonic function on $\C$ which is harmonic 
off of the support of $\mu$. The logarithmic energy of $\mu$ is 
\[
I(\mu):= \int \int \log \frac{1}{\lvert z-w \rvert}\,d\mu(w)\,d\mu(z);
\]
this may be $+\infty$. If $\mu$ is a positive measure with unbounded support such that 
$$\int \log^+ |z| \,d\mu(z) < \infty,$$
then $U_\mu$ is still locally integrable and superharmonic, and $I(\mu)>-\infty$ (see \cite{CKL:tpptus98}).
\smallskip

Given a compact set $K\subset \C$, let $\mathcal M(K)$ denote the 
cone of probability measures supported in $K$. The logarithmic capacity of $K$ is 
\[
\capa(K):=\exp[-\inf \{I(\mu): \mu \in \mathcal M(K)\}];
\]
this coincides with the transfinite diameter $d(K)$ in the introduction
(see, for instance, \cite[Ch~2]{Ahlfors:confinv10} or \cite[Ch~5]{ransford:ptCp95}). If $\capa(K)>0$, 
the Frostman conditions (cf., \cite{ransford:ptCp95}) tell us that there is a unique measure 
$\mu_K \in \mathcal M(K)$ with $ \capa(K)=\exp[-I(\mu_K)]$ and $U_{\mu_K}= I(\mu_K)$ 
quasi-everywhere (abbreviated as q.e.) on $K$;
i.e., on all of $K$ except for a polar set. Here, $E\subset \C$ is polar if there exists a subharmonic 
function $u\not \equiv -\infty$ on a neighborhood of $E$ with $E\subset \{u=-\infty\}$; 
for $E$ compact this is equivalent to $\capa(E)=0$. 
If $E \subset \C$ 
is compact and polar, for any $K\subset \C$ compact we have $ \capa(K\cup E)=  \capa(K)$.
The measure $\mu_K$ is called the {\textit{equilibrium measure of}} $K$.
\smallskip

For non-polar compact $K$, we will have 
use of the Green's function 
\[
\gr_K(z,\infty):=-U_{\mu_K}(z)+I(\mu_K)
\]
for (the unbounded component of the complement of) $K$ with pole at $\infty$; note that 
$\gr_K(z,\infty)=0$ q.e. on $K$. This is an upper semicontinuous function which is harmonic 
outside of $K$; from (\ref{potfcn}), it follows that $\gr_{\nice} (z, \infty)- \log^+|z|$
is bounded as stated in the introduction. If $U_{\mu_K}$ (equivalently, $\gr_K$) is continuous,
we say that $K$ is regular. 
This is the same as $U_{\mu_K}=I(\mu_K)$ ($\gr_K=0$) on $K$ (cf. \cite{ransford:ptCp95}).
The Green's function of the circle (or closed disk) of radius $r$ centered at $a$ is 
$\log^+(|z-a|/r)$. For the real interval $K =[-1,1]$, we have
$$\gr_{K} (z, \infty)= \log \vert z+ \sqrt{z^2-1} \vert.$$ 
More generally, the Green's function of $K= [a, b]\subset \R$ is given by
\[\gr_{K} (z, \infty)= \log \vert f(z)+ \sqrt{f(z)^2-1} \vert,\]
where $f(z)= \frac{2}{b-a}z - \frac{a+b}{b-a}$, and $ \capa({K})=\frac{b-a}{4}$.
If $K$ is the Julia set of a polynomial $P$ of degree $n \geq 2$ then its Green's function is
\[
\gr_{K} (z, \infty)= \lim_{j\to \infty} \frac{1}{n^j} \log^+\vert P^j(z)\vert,
\]
where $P^j$ is the $j$-fold composition of $P$ with itself and
%In this case, if $b$ is the leading coefficient of $P$, then 
$\capa (K) = | {\rm lead}(P) |^{-1/(n-1)}$, where ${\rm lead}(P)$ is the leading coefficient of $P$.
\smallskip

It is a standard fact from potential theory that the logarithmic energy of compactly
supported measures satisfies a convexity 
property (see \cite[Lemma~2.1]{Ahlfors:confinv10} or \cite[Lemma~I.1.8]{SaffTotik:lpwef97}).
It turns out that this property also holds for certain measures that are not 
necessarily compactly supported; later we will need to consider measures $\mu$ with unbounded support. 

\begin{result}[Theorem~2.5 of \cite{CKL:tpptus98}]\label{R:energy_convex}
Let $\tau$ be a signed measure with $\tau(\C)=0$ that satisfies
$\int \log^+|z|\,d\vert \tau\vert < \infty$ and $I(\vert \tau\vert) < \infty$. 
Then the logarithmic energy $I(\tau)$ of $\tau$ is well-defined and $I(\tau) \geq 0$.
In particular,
if $\tau_1$ and $\tau_2$ are probability measures satisfying
$\int \log^+|z|\,d\tau_i < \infty$ and $I(\tau_i) < \infty$ for $i=1,2$, then, considering $\tau=\tau_1-\tau_2$, 
\[
2 \int U_{\tau_1}(z) \,d{\tau_2}(z) \leq {I(\tau_1)+ I(\tau_2)}.
\]
\end{result}
\noindent{In the case where we consider $\tau=\tau_1-\tau_2$, the inequality follows because
\[
I(\tau)={I(\tau_1)+ I(\tau_2)}-2 \int U_{\tau_1}(z) \,d{\tau_2}(z).
\]}

\subsection{Heights and Mahler measure}\label{SS:height}
We recall the notion of height of $\alpha\in \overline \Q$ with respect to a non-polar compact
set $\nice$ as defined in (\ref{height}). Let $P_{\alpha} \in \Z[z]$ be an irreducible 
polynomial with leading coefficient $b_{\alpha}$ and degree $\deg \alpha$.
Then
\[
h_{\nice}(\alpha):= \frac{1}{\deg \alpha} \Big( \log|b_{\alpha}| + \sum_{x \in \Gal (\alpha)} \gr_{\nice} (x, \infty)\Big).
\]
%We can rewrite this height using local heights. If $L=\Q(\alpha)$, then
%\[
%h_{\nice}(\alpha)= \frac{1}{\deg \alpha} \Big( \sum_p \sum_{\sigma:L \hookrightarrow \overline \Q_p} \log^+\vert \sigma (\alpha) \vert_p
%+ \sum_{\sigma:L \hookrightarrow \C} \gr_{\nice} (\sigma (\alpha), \infty) \Big),
%\]
%where $|x|_p$ is the absolute value on $\overline \Q_p$ that extends the canonically normalized absolute value on $\Q_p$.
%\smallskip

The Mahler measure of a polynomial 
$P_n(z)=a_n\prod_{j=1}^n(z-\alpha_j)$ of degree $n$ (thus $a_n\not = 0$) is 
\[
M(P_n):=\exp\Big[\frac{1}{2\pi}\int_0^{2\pi}\log |P_n(e^{it})|dt\Big]=|a_n|\prod_{j=1}^n \max\{1,|\alpha_j|\},
\]
the latter equality following from Jensen's formula. Since the Green's function for the unit circle 
$\nice=\{|z|=1\}$ is $g_{\nice}(z,\infty)=\log^+|z|:=\max\{0,\log|z|\}$, we have 
\[
\frac{1}{n}\log M(P_n)= \frac{1}{n}\log|a_n|+ \frac{1}{n}\sum_j \gr_{\nice} (\alpha_j, \infty).
\]
This coincides, in the case where $P_n \in \Z[z]$ is an irreducible polynomial with 
roots $\alpha_1,...,\alpha_n\in \overline \Q$, to the Weil height $h(\alpha_j)$.
\smallskip

More generally, following Pritsker \cite{Pritsker:crelle11}, we define the Mahler measure of a polynomial 
$P_n(z)=a_n\prod_{j=1}^n(z-\alpha_j)$ of degree $n$ with respect to any non-polar compact set $\nice$ as 
$$M_{\nice}(P_n):=|a_n|\exp\Big[\sum_{j=1}^n \gr_{\nice}(\alpha_j, \infty)\Big]$$
so that, if $P_n \in \Z[z]$ is an irreducible polynomial with 
roots $\alpha_1,...,\alpha_n\in \overline \Q$, 
we again have the relation 
\begin{equation}\label{htmahler} 
\frac{1}{n}\log M_{\nice}(P_n)=h_{\nice}(\alpha_j).
\end{equation}
Observe that
if $P=P_1P_2 \cdots P_n$ with $\deg P_i>0$ for every $1\leq i \leq n$, then we have
\begin{equation}\label{mahler}
M_{\nice}(P)^{\frac{1}{\deg P}}= \prod_{i=1}^{n}\Big( M_{\nice}(P_i)^{\frac{1}{\deg P_i}}\Big)^{\frac{\deg P_i}{\deg P}}
\end{equation}
where $\sum_{i=1}^{n} \frac{\deg P_i}{\deg P}=1$.
\smallskip

Using this generalized notion of Mahler measure, Pritsker proved the following:

\begin{result}[Pritsker, \cite{Pritsker:crelle11}]\label{pritsker} Let $P_n(z)=a_n\prod_{j=1}^n(z-\alpha_{j,n})$ 
be a sequence of polynomials with integer coefficients and simple zeros. 
Let $\nice \subset \C$ be compact with $\capa(\nice)=1$. Then 
$$\lim_{n\to \infty} (M_{\nice}(P_n))^{1/n}=1$$
if and only if 
\begin{enumerate}
\item $\lim_{n\to \infty} |a_n|^{1/n}=1$;
\item $\lim_{R\to \infty} \lim_{n\to \infty} (\prod_{|\alpha_{j,n}|\geq R} |\alpha_{j,n}|)^{1/n}=1$; and 
\item $\mu_n:=\frac{1}{n} \sum_{j=1}^n \delta_{\alpha_{j,n}}\to \mu_\nice$ in the weak* topology as $n\to \infty$.
\end{enumerate}
\end{result}

As an immediate corollary, we have the following result, which will be used in the proof
of Theorem~\ref{T:existence}.

\begin{lemma}\label{L:cap_1_distri}
Let $\nice \subset \C$ be compact and symmetric about the real axis. Suppose that
$\capa(\nice)= 1$. Let $\{\alpha_n\}$ be a sequence of distinct algebraic integers such that
for every neighborhood $U$ of $\nice$ there exist $N \in \N$ such that
${\Gal (\alpha_n)} \subset U$ for all $n \geq N$. Then
\[
\mu_n:= \frac{1}{\deg \alpha_n} \sum_{x \in \Gal (\alpha_n)} \delta_x 
\longrightarrow  \mu_{\nice} \quad {\text as } \quad n \longrightarrow \infty,
\]
where $\mu_{\nice}$ is the equilibrium measure of $\nice$.
\end{lemma}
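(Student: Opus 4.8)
The plan is to deduce this directly from Pritsker's result (Result~\ref{pritsker}) by verifying that the hypotheses on $\{\alpha_n\}$ force $\lim_{n\to\infty}(M_{\nice}(P_{\alpha_n}))^{1/\deg\alpha_n}=1$, so that conclusion (3) of Result~\ref{pritsker} gives exactly the desired weak* convergence $\mu_n\to\mu_{\nice}$. Here $P_{\alpha_n}$ is the irreducible integer polynomial with root $\alpha_n$; since $\{\alpha_n\}$ are distinct algebraic integers, each $P_{\alpha_n}$ is monic (so $a_n=b_{\alpha_n}=\pm1$) with simple zeros, hence the sequence $\{P_{\alpha_n}\}$ fits the framework of Result~\ref{pritsker}. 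By \eqref{htmahler}, $\tfrac{1}{\deg\alpha_n}\log M_{\nice}(P_{\alpha_n})=h_{\nice}(\alpha_n)$, so it suffices to show $h_{\nice}(\alpha_n)\to 0$.

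First I would show $\deg\alpha_n\to\infty$. Since $\capa(\nice)=1$, we can fix a neighborhood $U$ of $\nice$ and, by hypothesis, an $N$ with $\Gal(\alpha_n)\subset U$ for $n\ge N$; Result~\ref{R:finite_integers} does not apply here (capacity is not $<1$), so instead I use the Northcott property: if $\deg\alpha_n$ stayed bounded along a subsequence, then since $\gr_{\nice}(\cdot,\infty)$ is bounded on the compact set $\overline U$ (take $U$ bounded) we would have $h_{\nice}(\alpha_n)\le M$ along that subsequence with $\deg\alpha_n\le D$, forcing only finitely many distinct $\alpha_n$ — contradicting distinctness. Hence $\deg\alpha_n\to\infty$. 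Next, the main estimate: fix $\eps>0$. By regularity considerations, $\gr_{\nice}(z,\infty)\to 0$ as $z\to\nice$, more precisely $\gr_{\nice}(z,\infty)=0$ q.e.\ on $\nice$ and $\gr_{\nice}$ is continuous off a polar set; choosing the neighborhood $U=U_\eps=\{z:\gr_{\nice}(z,\infty)<\eps\}$ (which is an open neighborhood of $\nice$ up to a polar subset of $\nice$, and can be enlarged to a genuine neighborhood since $\gr_{\nice}$ is subharmonic and $\le\eps$ near $\nice$), the hypothesis gives $N_\eps$ with $\Gal(\alpha_n)\subset U_\eps$ for $n\ge N_\eps$. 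Then for $n\ge N_\eps$,
\[
0\le h_{\nice}(\alpha_n)=\frac{1}{\deg\alpha_n}\sum_{x\in\Gal(\alpha_n)}\gr_{\nice}(x,\infty)<\frac{1}{\deg\alpha_n}\cdot\deg\alpha_n\cdot\eps=\eps,
\]
using $b_{\alpha_n}=\pm1$ so $\log|b_{\alpha_n}|=0$, and $\gr_{\nice}\ge0$. Thus $h_{\nice}(\alpha_n)\to 0$, i.e.\ $(M_{\nice}(P_{\alpha_n}))^{1/\deg\alpha_n}\to 1$.

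Finally, apply Result~\ref{pritsker} to the sequence $\{P_{\alpha_n}\}$: its conclusion (3) states precisely $\mu_n\to\mu_{\nice}$ in the weak* topology, which is the assertion of the lemma. (Conclusions (1) and (2) also hold automatically — (1) because $|a_n|=1$, and (2) because for $R$ larger than $\sup_{z\in\overline U}|z|$ the product over $|\alpha_{j,n}|\ge R$ is empty for large $n$ — but we only need (3).) The main subtlety to handle carefully is the interplay between ``neighborhood of $\nice$'' in the hypothesis and the sublevel set $U_\eps$ of the Green's function: I would argue that every such sublevel set contains a genuine (open) neighborhood of $\nice$ minus a polar set, and since a sequence of \emph{algebraic integers} has all conjugates avoiding any prescribed polar set eventually is not automatic — so instead I work directly with the given neighborhood $U$ and use that $\sup_{z\in\overline U}\gr_{\nice}(z,\infty)=:c(U)$ can be made arbitrarily small by shrinking $U$ (because $\gr_{\nice}$ is bounded and vanishes q.e.\ on $\nice$, hence $c(U)\downarrow 0$ as $U\downarrow\nice$ by upper semicontinuity and the maximum principle). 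This is the step I expect to require the most care.
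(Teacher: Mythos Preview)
Your approach---verify $(M_{\nice}(P_{\alpha_n}))^{1/\deg\alpha_n}\to 1$ by showing $h_{\nice}(\alpha_n)\to 0$, then invoke the forward implication of Result~\ref{pritsker}---is exactly what the paper intends by ``immediate corollary''; the paper supplies no further argument, so your expansion is consistent with it.

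However, the step you flagged as needing care has a genuine gap in the irregular case. Your claim that $c(U):=\sup_{z\in\overline U}\gr_{\nice}(z,\infty)\downarrow 0$ as $U\downarrow\nice$ is \emph{false} when $\nice$ is not regular: at an irregular point $z_0\in\nice$ one has $\gr_{\nice}(z_0,\infty)>0$, and since $z_0\in U$ for every neighborhood $U$ of $\nice$, we get $c(U)\geq\gr_{\nice}(z_0,\infty)>0$ for all such $U$. Neither upper semicontinuity nor the maximum principle rescues this, so the uniform pointwise bound $h_{\nice}(\alpha_n)<\eps$ via $c(U)$ breaks down. (When $\nice$ is regular your sublevel-set argument does go through, since then $\gr_{\nice}$ is continuous and $\{\gr_{\nice}<\eps\}$ is a genuine open neighborhood of $\nice$.)

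A repair that works in general: if $h_{\nice}(\alpha_n)\not\to 0$, extract a subsequence with $h_{\nice}(\alpha_{n_k})\geq\eps>0$; along it the heights are bounded (conjugates lie in a fixed bounded neighborhood), so pass to a further subsequence with $h_{\nice}(\alpha_{n_k})$ convergent and $\mu_{n_k}\to\mu$ weak*. The limit $\mu$ is supported on $\nice$ and, by Result~\ref{R:finite_energy} (the hypotheses \ref{I:assumption1}--\ref{I:assumption3} hold along this subsequence since $b_n=\pm1$), has finite energy, hence does not charge the polar set $\{z\in\nice:\gr_{\nice}(z,\infty)>0\}$. Since $\gr_{\nice}$ is upper semicontinuous and bounded on $\overline U$, the Portmanteau inequality gives $\limsup_k h_{\nice}(\alpha_{n_k})=\limsup_k\int\gr_{\nice}\,d\mu_{n_k}\leq\int\gr_{\nice}\,d\mu=0$, contradicting $h_{\nice}(\alpha_{n_k})\geq\eps$. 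Hence $h_{\nice}(\alpha_n)\to 0$ and Result~\ref{pritsker} applies as you intended.
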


Next, we recall that the {\it resultant} of two polynomials 
\begin{align}
P(z)&=a_nz^n+\cdots +a_0=a_n\prod_{j=1}^n (z-\alpha_j) \notag \\
\hbox{and} \quad Q(z)&=b_mz^m +\cdots +b_0=b_m\prod_{k=1}^m (z-\beta_k) \notag
\end{align}
is given by
\[
\hbox{Res}(P,Q):=a_n^mb_m^n \prod_{j=1,...,n; \ k=1,...,m}(\alpha_j-\beta_k).
\]
For $P,Q\in \Z[z]$ it follows that $\hbox{Res}(P,Q)\in \Z$ (cf., \cite{CLO}, page~153). 
Thus, if $P,Q\in \Z[z]$ do not have any factor in common then $\vert \hbox{Res}(P,Q)\vert \geq 1$.
The {\it discriminant} of $P$ is related to the resultant of $P$ and $P'$:
\begin{equation}\label{disc}
\hbox{Disc}(P)=\frac{(-1)^{n(n-1)/2} }{a_n} \cdot \hbox{Res}(P,P')=(-1)^{n(n-1)/2}a_n^{2n-2}
\prod_{j\not=k}(\alpha_j-\alpha_k). 
\end{equation}
The reader will notice the resemblance to the quantity
$\prod_{i<j} \vert \xi_i- \xi_j \vert^{\frac{2}{n(n-1)}}$ in the definition of transfinite diameter from the introduction.
\smallskip

Finally, recall that a sequence of measures $\{\mu_{\beta}\}_{\beta \in B}$ on $\C$ is {\it tight} if for 
any $\epsilon >0$ there exists a compact set $K_{\epsilon}$ such that 
$\mu_{\beta}(\C\setminus K_{\epsilon})< \epsilon$ for all $\beta \in B$. 
If $\{\mu_{\beta}\}_{\beta \in B}$ is tight and uniformly bounded in the total variation 
norm, Prokhorov's theorem implies that it has a convergent subsequence. 
We will need the following application.

\begin{lemma}\label{L:tight}
Let $\{\alpha_n\}$ be a sequence in $ \overline \Q$ such that corresponding 
sequence of heights $\{h_{\nice}(\alpha_n)\}$ is bounded.
%such that $h_{\nice}(\alpha_n) < C$ for some $C>0$ for every $n \in \N$. 
If 
\[
\mu_n:= \frac{1}{\deg \alpha_n} \sum_{x \in \Gal (\alpha_n)} \delta_x
\]
is the probability measure uniformly distributed on all conjugates of $\alpha_n$, then 
the sequence $\{\mu_n\}$ has a convergent subsequence in the weak* topology.
\end{lemma}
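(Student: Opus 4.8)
The plan is to reduce everything to Prokhorov's theorem, as recalled just before the statement: since each $\mu_n$ is a probability measure, the total variations are uniformly bounded, so it suffices to prove that the family $\{\mu_n\}$ is tight. To establish tightness I would first extract from the height bound a uniform control on $\int \log^+|z|\,d\mu_n(z)$, and then convert this into an estimate on the mass that $\mu_n$ can place far from the origin.

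The first step uses positivity. Since $b_{\alpha_n}$ is a nonzero integer we have $\log|b_{\alpha_n}|\geq 0$, and since the Green's function $\gr_{\nice}(\cdot,\infty)$ is nonnegative on all of $\C$, the definition \eqref{height} gives
\[
\int \gr_{\nice}(z,\infty)\,d\mu_n(z)=\frac{1}{\deg\alpha_n}\sum_{x\in\Gal(\alpha_n)}\gr_{\nice}(x,\infty)\leq h_{\nice}(\alpha_n)\leq M,
\]
where $M:=\sup_n h_{\nice}(\alpha_n)<\infty$ by hypothesis. Next, as recalled in Subsection~\ref{SS:potential_theory}, the function $\gr_{\nice}(z,\infty)-\log^+|z|$ is bounded on $\C$, say $|\gr_{\nice}(z,\infty)-\log^+|z||\leq C$; hence $\int \log^+|z|\,d\mu_n(z)\leq M+C$ for every $n$. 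Now for any $R>1$ a Chebyshev-type estimate gives
\[
\mu_n(\{|z|>R\})\leq \frac{1}{\log R}\int_{\{|z|>R\}}\log|z|\,d\mu_n(z)\leq \frac{M+C}{\log R}.
\]
Given $\epsilon>0$, pick $R$ so large that $(M+C)/\log R<\epsilon$ and set $K_\epsilon:=\{z:|z|\leq R\}$; then $\mu_n(\C\setminus K_\epsilon)<\epsilon$ for all $n$, so $\{\mu_n\}$ is tight. Prokhorov's theorem then furnishes a weak* convergent subsequence, and since the $\mu_n$ are probability measures and the family is tight, the subsequential limit is again a probability measure.

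The only step requiring genuine care — the crux of the argument — is the uniform control of mass near infinity: a priori the conjugates of $\alpha_n$ could be large and widely dispersed, and nothing but the height bound prevents escape of mass to $\infty$. The boundedness of $\gr_{\nice}(\cdot,\infty)-\log^+|\cdot|$ is precisely what translates the arithmetic hypothesis $\sup_n h_{\nice}(\alpha_n)<\infty$ into the analytic tightness estimate; everything else is bookkeeping with positivity and Prokhorov's theorem.
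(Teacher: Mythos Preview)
Your proof is correct and follows essentially the same approach as the paper: establish tightness of $\{\mu_n\}$ from the height bound via the growth of $\gr_{\nice}(\cdot,\infty)$ at infinity, then invoke Prokhorov's theorem. The paper argues tightness by contradiction (if a fixed proportion of mass lies outside arbitrarily large compacta then $h_{\nice}(\alpha_n)$ becomes unbounded), whereas your direct Chebyshev estimate is a cleaner, quantitative version of the same idea.
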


\begin{proof}
We shall show that the sequence $\{\mu_n\}$ is tight. Then the result follows from Prokhorov's theorem.
Assume that the sequence $\{\mu_n\}$ is not tight. By definition, there exists $\rho >0$ such that
for every compact set $K \subset \C$, we have $\mu_n (\C \setminus K) > \rho$ for some $n \in \N$.
Since $\gr_{\nice}(z, \infty) \to \infty$ as $z \to \infty$, by enlarging $K$, 
we get an $\alpha \in \overline \Q$ from the sequence $\{\alpha_n\}$
for which $h_{\nice}(\alpha)$ is arbitrarily large.
This contradicts the assumption that $\{h_{\nice}(\alpha_n)\}$ is bounded.
Thus the sequence $\{\mu_n\}$ is tight, and there exists a convergent subsequence of $\{\mu_n\}$. 
\end{proof}

\section{Proof of Theorem~\ref{T:existence}}
In this section, we prove Theorem~\ref{T:existence}. We assume that $\nice \subset \C$ is compact and symmetric about the real axis with $0<\capa(\nice) <1$. Since $\nice$ is symmetric about the real axis,
$\gr_{\nice}(z, \infty) =\gr_{\nice} (\bar z, \infty)$; thus level sets of 
$\gr_{\nice} (z, \infty)$ are also symmetric about the real axis. 
For $n \in \N$, define
\[
L_n:=\{ z \in \C : \gr_{\nice} (z, \infty)=n\}.
\]
We choose a compact subset $E_n$ of $L_n$ that is symmetric about the real axis and satisfies $\capa (\nice \cup E_n)=1$.
For sufficiently large $n$, such a set always exists. To see this, we start with a compact subset $E$ of $L_n$
that is invariant under complex conjugation and which satisfies $\capa (\nice \cup E) >1$. For example, if we choose $E$ with $\capa (E) >1$ then $\capa (\nice \cup E) >1$.
Since capacity is a monotone function, by considering a decreasing family of subsets of $L_n$, we eventually get 
a compact subset $E_n$ of $L_n$ that satisfies $\capa (\nice \cup E_n)=1$.
Since $\capa (\nice)<1$, the set $E_n$ is non-polar.
\smallskip

We recall the Fekete-Szeg\"{o} result in \cite{FekeSzego:aeicwr55} discussed in the introduction, i.e., if $\capa(K) \geq 1$ and $K$ is symmetric about the real axis, then every neighborhood $U$ of $K$ 
contains infinitely many 
algebraic integers $\alpha$ so that $\alpha$ and all its conjugates lie in $U$. Let $\nu_n$ be the equilibrium measure of $\nice \cup E_n$. Since $\capa (\nice \cup E_n)=1$,
we can get a sequence $\{\alpha_{n,j}\}$ of algebraic integers such that
${\Gal (\alpha_{n,j})}$ eventually belongs to any neighborhood of $\nice\cup E_n$.
By Lemma~\ref{L:cap_1_distri},
\begin{equation}\label{E:convergence}
\mu_{n,j}:= \frac{1}{\deg \alpha_{n,j}} \sum_{x \in \Gal (\alpha_{n,j})} \delta_x 
\longrightarrow  \nu_{n} \quad {\text as } \quad  j \longrightarrow \infty.
\end{equation}
Fix a neighborhood $U$ of $\nice$. We want to determine what fraction of ${\Gal (\alpha_{n,j})}$ is in $U$ for sufficiently large $j$; thus we want to determine the value of $\nu_n(\nice)$.
To do this, we solve a Dirichlet problem on the domain $D_n$ in the Riemann sphere $\sph$, where $D_n$ is the
component of $ \sph \setminus (\nice \cup E_n)$ containing $\infty$. 
\smallskip

Let $\phi$ be the function on the boundary $\partial D_n$ of $D_n$ such that
$\phi \equiv 0$ on the exterior boundary of $\nice$ and $\phi \equiv 1$ on the exterior boundary of $E_n$.
Since $\phi$ is continous on $\partial D_n$, there exists a unique bounded harmonic function $f$ on $D_n$ such that
$\lim_{z \to \xi} f(z) = \phi(\xi)$ for q.e. $\xi \in \partial D_n$ (cf., Corollary 4.2.6, \cite{ransford:ptCp95}). Then, by definition, $f(\infty)=\nu_n(E_n)$.
We show
\[
f(z) = \frac{\gr_{\nice} (z, \infty) - \gr_{\nice \cup E_n} (z, \infty)}{n}.
\]
Observe that, since $g_{\nice} (z, \infty) \equiv n$ on $E_n$, $\lim_{z \to \xi} f(z) = \phi(\xi)$ for q.e. $\xi \in \partial D_n$.
The function $f$ is harmonic and bounded on $D_n$ with
\begin{align}
f(\infty) &= \lim_{z \to \infty} \frac{\gr_{\nice} (z, \infty) - \gr_{\nice \cup E_n} (z, \infty)}{n} \notag \\
&= - \frac{1}{n} \log \capa(\nice). \notag %- \log\Big(\frac{1}{\capa(\nice \cup E_N)}\Big) \notag \\
%&= \log\Big(\frac{1}{\capa(\nice)}\Big)^{1/N} \notag.
\end{align}
Thus the function $f$ defined above solves the Dirichlet problem for $\phi$.
Therefore, we have $\nu_n(\nice)= 1+ \frac{1}{n} \log \capa(\nice)$. 
\smallskip

%Now, there exists a sequence of algebraic integers $\{\alpha_{n, j}\}$ such that the conjugates of 
%$\alpha_{n, j}$ equidistribute with respect to $\nu_n$.
Now, owing to the above observations, there exist $j' \in \N$ such that $\alpha_{n,j'}$ $=:\alpha_{n}$ satisfies
\begin{equation}\label{E:portion}
\frac{\sharp \{\Gal (\alpha_n) \cap U\}}{\deg \alpha_n} \geq  1+ \frac{1}{n} \log \capa(\nice) -\frac{1}{n}.
\end{equation}
Note that, since $\capa(\nice) >0$, the right hand side of \eqref{E:portion} converges to 1 as $n \to \infty$.
Since $n$ is arbitrary, we get a sequence $\{\alpha_n\}$ of algebraic integers that satisfy \eqref{E:most}.
%If $\nice$ is polar and $U$ is a neighborhood of $\nice$. We consider a non-polar compact set $\nice'$ such that
%$\nice \subset \nice' \subset U$ and $\nice'$ is invariant under complex conjugation. If we apply the above construction to
%$\nice'$ and $U$, we get a sequence $\{\alpha_n\}$ of algebraic integers that satisfy \eqref{E:most}. This finishes
%the proof of part~\ref{I:most}.
\smallskip

We shall now construct a sequence $\{\alpha_n\}$ of distinct algebraic integers
that satisfy \eqref{E:most} for any neighborhood $U$ of $\nice$ and
$h_{\nice}(\alpha_n) \to - \log {\capa(\nice)}$ as $n \to \infty$.
%We define
%\[
%U_n:= \{z \in \C : {\rm dist}(x, z) < \frac{1}{n} {\textnormal{ for some }} x \in \nice \},
%\]
%where $E_n$ is as in part~\ref{I:most}. 
Let $\{\alpha_{n, j}\}$ be a sequence of algebraic integers as above.
%such that the conjugates of 
%$\alpha_{n, j}$ equidistribute with respect to $\nu_n$. 
There exists $J \in \N$ such that for all $j \geq J$,
we get $\gr_{\nice}(z, \infty) \geq n- \frac{1}{n^2}$ for at least $\deg \alpha_{n, j} \cdot \Big( \nu_n(E_n)-\frac{1}{n^2}\Big)$
many $z \in \Gal (\alpha_{n,j})$.
Therefore, for all $j \geq J$, we get
\begin{equation}\label{E:inq_1}
h_{\nice} (\alpha_{n, j}) \geq \Big(n-\frac{1}{n^2}\Big) \Big(- \frac{1}{n} \log \capa(\nice) - \frac{1}{n^2} \Big).
\end{equation}
Let $r>0$ be such that $\Gal (\alpha_{n,j}) \subset \D(0;r)$ for all $j$. Since $\gr_{\nice}(z, \infty)$ is an upper semicontinuous function, by \eqref{E:convergence}, we get
\begin{equation}\label{E:inq_2}
\limsup_{j \to \infty} h_{\nice} (\alpha_{n, j}) = \limsup_{j \to \infty} \int_{\{\vert z \vert \leq r\}} \gr_{\nice}(z, \infty) \,d\mu_{n,j}(z) \leq 
\int_{\{\vert z \vert \leq r\}} \gr_{\nice}(z, \infty) \,d\nu_n(z).
\end{equation}
Using the definition of $\gr_{\nice}(z, \infty)$ and the fact that $U_{\nu_n} (z)=0$ q.e. on $\nice \cup E_n$, we get
\begin{align}
\int \gr_{\nice}(z, \infty) \,d\nu_n(z) =\int (U_{\mu_{\nice}} (z) - \log \capa(\nice) )\,d\nu_n(z)
&= - \log \capa(\nice)+ \int U_{\nu_n} (z)\,d \mu_{\nice}(z)  \notag \\
&= - \log \capa(\nice). \label{E:inq_3}
\end{align}
By \eqref{E:inq_1}, \eqref{E:inq_2} and \eqref{E:inq_3}, there exist $j' \in \N$ such that $\alpha_{n,j'}$ satisfies
\[
\lvert h_{\nice} (\alpha_{n, j'}) + \log \capa(\nice) \rvert < \frac{1}{n}
\]
and
\[
\frac{\sharp \{\Gal (\alpha_{n,j'}) \cap U_n\}}{\deg \alpha_{n,j'}} \geq 1+ \frac{1}{n} \log \capa(\nice) -\frac{1}{n},
\]
where $U_n:= \{z \in \C : {\rm dist}(x, z) < \frac{1}{n} {\textnormal{ for some }} x \in \nice \}$.
Since $n$ is arbitrary, we get a sequence $\{\alpha_n:= \alpha_{n,j'}\}$ of algebraic integers 
that satisfy the required properties. This finishes the proof of Theorem~\ref{T:existence}.
%of part~\ref{I:height}.

\begin{remark} \label{rmk} We can show that the sequence of measures $\{\nu_n\}$ constructed above converge 
in the weak* topology to the equilibrium measure $\mu_{\nice}$ (and hence so does the appropriate subsequence 
of the measures $\{\mu_{n,j}\}$). %Here is a sketch. 
First, by construction, $\{\nu_n\}$ is tight and 
uniformly bounded in the total variation norm, so by Lemma \ref{L:tight} there exists a weak* convergent 
subsequence which we continue to denote by $\{\nu_n\}$. Let $\mu$ be its weak* limit.
Writing $\nu_n:=\on_n +\away_n$ where $\on_n:=\nu_n|_{\nice}$ and 
$\away_n:= \nu_n|_{E_n}$, we have $\on_n(\nice)= 1+ \frac{1}{n} \log \capa(\nice)\to 1$ 
so that $\mu$ is the weak* limit of $\{\on_n\}$ and $\mu$ is a probability measure 
supported in $\nice$. Since the logarithmic potential $U_{\mu}$ is lower semicontinuous, we have 
\[
\liminf_{n\to \infty} U_{\on_n}(z_n)\geq U_{\mu}(z) \quad \hbox{for $z\in \C$}
\]
where $z_n\to z$ (cf., Theorem~I.6.8 of \cite{SaffTotik:lpwef97}). 
We will show that $U_{\mu}(z)\leq -\log \capa(\nice)$ for q.e. $z\in \nice$.
%by showing that, for large $n$ , $U_{\on_n}$ is approximately constant and equal to $-\log \capa(\nice)$ on $\nice$ 
%(we simply write $U_{\on_n}(z)\asymp -\log c(\nice)$ on $\nice$). 
For then $I(\mu)\leq -\log \capa(\nice)=I(\mu_{\nice})$ and the result follows.
\smallskip

To this end, since $\nu_n$ is the equilibrium measure of a set of logarithmic capacity equal to 1, we have 
\[
U_{\nu_n}(z)= U_{\on_n}(z)+ U_{\away_n}(z)=0 \quad \hbox{for q.e. $z\in \nice\cup E_n$}.
\]
In particular, the above holds for q.e. $z\in \nice$. Thus
\[
-\limsup_{n\to \infty} U_{\away_n}(z)\geq U_{\mu}(z) \quad \hbox{for q.e. $z\in \nice$}.
\]
Now for $z\in \nice$, since $E_n\to \infty$ and $\away_n(E_n) \to 0$, 
\begin{align}
\limsup_{n\to \infty} U_{\away_n}(z)=\limsup_{n\to \infty} \int_{E_n} \log \frac{1}{|z-w|}\,d\away_n(w) \notag
&= \limsup_{n\to \infty} \int_{E_n} \log \frac{1}{|w|}\,d\away_n(w)\notag \\
&=\limsup_{n\to \infty} \Bigl(-\int_{E_n} \gr_{\nice}(w, \infty)\,d\away_n(w)\Bigr). \notag
\end{align}
Since $\gr_{\nice}(w, \infty) \equiv n$ on $E_n$, we get
\[
\int_{E_n} \gr_{\nice}(w, \infty)\,d\away_n(w)= n \away_n(E_n)
=n \bigl(-\frac{1}{n}\log c(\nice)\bigr) = -\log \capa(\nice).
\]
Therefore, we have
\[
U_{\mu}(z) \leq -\limsup_{n\to \infty} U_{\away_n}(z)=-\log \capa(\nice) \quad \hbox{for q.e. $z\in \nice$}.
\]
%Since $U_{\nu_n}(z)=0$ for $z\in \nice$, we have $U_{\on_n}(z)\asymp -\log c(\nice)$ on $\nice$.
\end{remark}
\medskip

\section{Proof of Theorem~\ref{T:limiting}}
Let $\nice \subset \C$ be compact and symmetric about the real axis, and assume that $\nice$
is non-polar. 
Given a sequence $\{\alpha_n\}$ in $\overline\Q$, for simplicity of notation, we let $b_{n}:=b_{\alpha_n}$ and
 $P_n:=P_{\alpha_n}$. 
%Note that $\deg P_n=\deg \alpha_n$.
%We want a lower bound on $\liminf_{n \to \infty} h_{\nice} (\alpha_n)$ when $\deg \alpha_n \to \infty$.
If $\liminf_{n \to \infty} h_{\nice} (\alpha_n) < \infty$, then there exists a subsequence $\{\alpha_{n_k}\}$ 
for which $\{h_{\nice} (\alpha_{n_k})\}$ and $\{{\log |b_{n_k}|}/{\deg \alpha_{n_k}}\}$ are both convergent.
Thus, without loss of generality,
%we may assume that $\{\alpha_n\}$ satisfies:
we make the following assumptions:
\begin{enumerate}[label=$(\alph*)$]
\item\label{I:assumption1} $\deg \alpha_n \to \infty$; 
\item\label{I:assumption2} the sequence of heights $\{h_{\nice} (\alpha_n)\}$ is convergent; and 
\item\label{I:assumption3} the sequence $\{{\log |b_{n}|}/{\deg \alpha_{n}}\}$ is convergent.
\end{enumerate}
As in Lemma~\ref{L:tight}, we let 
$$\mu_n:= \frac{1}{\deg \alpha_n} \sum_{x \in \Gal (\alpha_n)} \delta_x.$$
This lemma ensures that there exists a convergent subsequence of $\{\mu_n\}$. 
In general, the limiting measure $\mu$ 
of the subsequence need not be compactly supported. 
However, in the next result, we shall see that the limiting measure satisfies  
an interesting property that allows us to use potential-theoretic tools.

\begin{result}[Theorem~2.1 of \cite{Pritsker:adsman15}]\label{R:finite_energy}
Let $\{\alpha_n\}$ be a sequence in $\overline \Q$ satisfying \ref{I:assumption1}--\ref{I:assumption3}. 
Let $\{\mu_n\}$ be as above. 
Assume that $\mu_n$ converges to a probability measure $\mu$ in the weak* topology. Then,
$\mu$ has finite logarithmic energy $I(\mu)$. Furthermore, we
have that
\[
\int \log^+ |z| \,d\mu(z) < \infty.
\]
\end{result}

\noindent{This result has been stated in \cite{Pritsker:adsman15} under
the assumption that $\{h(\alpha_n)\}$ is bounded, where $h$ denotes
the standard Weil height. However, as noted in the introduction, there exist a constant $C=C(\nice)$ such that 
$\vert h(\alpha)-h_\nice(\alpha)\vert < C$ for every $\alpha \in \overline \Q$. 
Thus the result holds with our assumptions as well.} 
\smallskip

In fact, Pritsker's result is more general: if $\{P_n:=a_n\prod_{k=1}^n (z-\alpha_{k,n})\}$ is a sequence of 
polynomials in $\Z[z]$ having simple zeros with $\mu_n\to \mu$ and 
$H:=\limsup_{n\to \infty} M(P_n)^{1/n} <\infty$, then 
$I(\mu) \leq 2 \log H$ and 
$\int \log^+ |z| \,d\mu(z)\leq \log H$. The key idea is a uniform upper bound for $I(\mu_R)$ where 
$\mu_R$ is the restriction of the measure $\mu$ to the disk $\bar{\mathbb D}(0;R) :=\{z: |z| \leq R\}$. 
%Recall that $M(P_n)=|a_n|\prod_{k=1}^n \max\{1,|\alpha_{k,n}|\}$ and that if $\{\alpha_{k,n}\}_{k=1}^n$ 
%is a complete set of conjugates of $\alpha_{1,n}$ then $\frac{1}{n}\log M(P_n)=h(\alpha_{1,n})$.
We sketch the main ideas and refer to pp. 130--131 of \cite{Pritsker:adsman15} for details.
Ordering the zeroes 
$
|\alpha_{1,n}|\leq \dots \leq |\alpha_{m_n,n}| \leq R <  |\alpha_{m_n+1,n}| \leq \dots \leq |\alpha_{n,n}|,
$
using the inequality $|\alpha_{j,n}-\alpha_{k,n}|\leq 2|\alpha_{k,n}|$ and \eqref{disc}, one gets 
\[
1\leq |\hbox{Disc}(P_n)| \leq \prod_{1\leq j <k \leq m_n}|\alpha_{j,n}-\alpha_{k,n}|^2 \cdot 4^{(n-1)(n-m_n)}
\Bigl( |a_n|\prod_{m_n <k \leq n}|\alpha_{k,n}|\Bigr)^{2(n-1)}.
\]
Using this estimate, $H:=\limsup_{n\to \infty} M(P_n)^{1/n} <\infty$, and truncating the logarithmic kernel by 
taking $\min \{M,\log \frac{1}{|z-\zeta|}\}$ yields the bound 
\[
I(\mu_R)\leq (1-\mu(\bar{\mathbb D}(0;R)))\log 4 + 2 \log H.
\]
By letting $R \to \infty$, we get $I(\mu) \leq 2 \log H$.
For the finiteness of $\int \log^+ |z| \,d\mu(z)$, one observes that if $\mu_{n,R}$ is the measure 
$\mu_n$ restricted to $\bar{\mathbb D}(0;R)$, then 
\[
\int \log^+ |z|\,d\mu_{n,R}(z)= \frac{1}{n} \log \prod_{k=1}^{m_n}\max\{1, |\alpha_{k,n}|\}
\leq \limsup_{n\to \infty} \log M(P_n)^{1/n}= \log H.
\]
Thus we get $\int \log^+ |z| \,d\mu_{R}(z) \leq \log H$ and the result follows by letting $R \to \infty$.
\smallskip

By Result~\ref{R:finite_energy}, if $\{\mu_n\}$ converges to a probability measure $\mu$, then 
the logarithmic energy of $\mu$ is finite.
We make a definition at this stage. Recall the notation: $\gr_{\nice} (z, \infty)$ denotes the
Green's function of $\nice$ with pole at $\infty$.
\begin{definition}
Let $\{\beta_n\}$ be a sequence in $\overline \Q$.
The {\textit{escape rate}} of the sequence $\{\beta_n\}$ with
respect to $\nice$ is defined as
\begin{equation}\label{E:escape_rate}
{{\mathcal E}(\nice; \{\beta_n\})} = \lim_{r \to \infty} \bigl(\liminf_{n \to \infty}  \frac{1}{\deg \beta_n}\sum_{\substack{x \in \Gal (\beta_n)\\ |x| \geq r}} \gr_{\nice} (x, \infty)\bigr).
\end{equation}
\end{definition}
{\noindent Note that if $\{h_{\nice} (\beta_n)\}$ is bounded, 
the escape rate ${{\mathcal E}(\nice; \{\beta_n\})}$ is a finite non-negative number.}
We now show that the measures that give the limiting distribution of the conjugates
of a sequence in $\overline \Q$ satisfy an interesting arithmetic property. In the case of
a sequence of algebraic integers with bounded conjugates lying near a compact, symmetric set $\Sigma$ with $c(\nice)\geq 1$, 
the arithmetic property
characterizes such measures (see Section~\ref{S:remarks}; also see \cite{Smith:aicpd21}, \cite{OrlSar:ldcai23}).
\smallskip

We now prove Theorem~\ref{T:limiting} in two parts:

\begin{theorem}\label{T:meaure_chara}
Let $\{\alpha_n\}$ be a sequence in $\overline \Q$ satisfying \ref{I:assumption1}--\ref{I:assumption3}. 
If $\{\mu_n\}$ converges to a probability measure $\mu$ in the weak* topology, then
for any polynomial $Q \in \Z[z]$, we have 
\[
\frac{1}{\deg Q} \int \log \vert Q(z) \vert \,d\mu(z) \geq -  \lim_{n \to \infty} \frac{\log |b_n|}{\deg \alpha_n} - \esc,
\]
where $\esc$ is the escape rate of the sequence $\{\alpha_n\}$ as defined in \eqref{E:escape_rate}.
\end{theorem}

\begin{proof}
Fix a polynomial $Q  \in \Z[z]$.
Let $\phi$ be a compactly supported continuous function with $0\leq \phi \leq 1$.
Since $\phi(z) \log \vert Q(z) \vert$ is an upper semicontinuous function that is bounded from above, we have
\begin{align}
\frac{1}{\deg Q} \int \log \vert Q(z) \vert \,d\mu(z) 
\geq& \limsup_{n \to \infty} \frac{1}{\deg Q} \int \phi(z) \log \vert Q(z) \vert \,d\mu_n(z)  \notag \\
\geq& \liminf_{n \to \infty}\frac{1}{\deg Q}\int \log \vert Q(z) \vert \,d \mu_n(z) \notag \\
& -\liminf_{n \to \infty}\frac{1}{\deg Q} \int (1-\phi(z)) \log \vert Q(z) \vert \,d\mu_n(z). \label{E:usc_ineq}
\end{align}
For the last inequality, we use the fact that
$\limsup_{n \to \infty}(k_n+l_n) \geq \liminf_{n \to \infty}k_n+\limsup_{n \to \infty} l_n$. 
Note that, owing to Result~\ref{R:finite_energy}, $\int \log \vert Q(z) \vert \,d\mu(z)$ makes sense.
\smallskip

Since $P_n$ and $Q$ do not have roots in common for large $n$, 
we have $\vert {{\rm Res}} (Q, P_n) \vert \geq 1$. Thus we get
\begin{equation}\label{E:res}
 \frac{1}{\deg Q} \int \log \vert Q(z) \vert \,d\mu_n(z) =  \frac {\log \vert {{\rm Res}} (Q, P_n) \vert - \log |b_n^{\deg Q}| }{\deg Q \cdot \deg P_n} \geq -  \frac{\log |b_n|}{\deg \alpha_n}. 
\end{equation}
Observe that, as $z \to \infty$, $\gr_{\nice} (z, \infty) = \log |z| + O(1)$ and 
$(1/\deg Q)\log \vert Q(z) \vert= \log |z| + O(1)$. 
We denote the disk of radius $r$ centered at the origin by $\D(0;r)$ and its complement by $\D(0;r)^c$.
For every $\delta >0$ there exists
$r>>1$ such that $\mu (\D(0;r)^c) < \delta$ and $\mu_n (\D(0;r)^c) < \delta$ for sufficiently large $n$.
%Since $\mu_n ({{\rm {supp}}(\phi)^c}) \to 0$ as $n \to \infty$,
Thus if $0\leq \phi \leq 1$ with $\phi\equiv 1$ on $\D(0;r)$ for sufficiently large $r$, we get
\begin{align}\label{E:green_asym}
\liminf_{n \to \infty} \frac{1}{\deg Q} \int & (1-\phi(z)) \log \vert Q(z) \vert \,d \mu_n(z) \notag\\
&\leq \liminf_{n \to \infty} \frac{1}{\deg \alpha_n}\sum_{\substack{x \in \Gal (\alpha_n)\\ |x| \geq r}}\gr_{\nice}  (x, \infty) + \delta \cdot O(1).
%&\leq \lim_{n \to \infty} \frac{1}{\deg \alpha_n}\sum_{x \in \Gal (\alpha_n)}\gr_{\nice} (x, \infty) + \delta O(1).
\end{align}
Since the choice of $\phi$ in \eqref{E:usc_ineq} is arbitrary, by using \eqref{E:res} and \eqref{E:green_asym} in \eqref{E:usc_ineq},
it follows that
\[
\frac{1}{\deg Q} \int \log \vert Q(z) \vert \,d\mu(z) \geq -  \lim_{n \to \infty} \frac{\log |b_n|}{\deg \alpha_n} - \esc.
\]
\end{proof}

We now use the above theorem to get an upper bound on $I(\mu)$. Here we continue with the same notation as above.

\begin{theorem}\label{T:energy_upperbound}
Let $\{\alpha_n\}$ be a sequence in $\overline \Q$ satisfying \ref{I:assumption1}--\ref{I:assumption3}. 
If $\{\mu_n\}$ converges to a probability measure $\mu$ in the weak* topology, then
\[
I(\mu) \leq 2 \Big(\lim_{n \to \infty} \frac{\log |b_n|}{\deg \alpha_n} + \esc \Big),
\]
where $\esc$ is the escape rate of the sequence $\{\alpha_n\}$ as defined in \eqref{E:escape_rate}.
\end{theorem}

\begin{proof}
Let $\phi$ be a cut-off function as in Theorem~\ref{T:meaure_chara} so that $0\leq \phi \leq 1$ with 
$\phi\equiv 1$ on $\D(0;r)$ for sufficiently large $r>0$. Since $I(\mu)<\infty$ from Result~\ref{R:finite_energy}, 
given $\delta >0$, we can choose $r>>1$ large so that 
$\int \phi(z) U_{\mu}(z)\,d\mu(z)> I(\mu)-\delta$; in addition, as in the previous lemma, 
for such $r$ we may assume $\mu (\D(0;r)^c) < \delta$ and $\mu_n (\D(0;r)^c) < \delta$. 
Then since $\phi(z) U_{\mu}(z)$ is a lower semicontinuous function that is bounded from below, we have
\begin{align}
I(\mu) -\delta <  \int \phi(z) U_{\mu}(z)\,d\mu(z) \leq& \liminf_{n \to \infty}\int \phi(z) U_{\mu}(z)\,d\mu_n(z) \notag \\ 
\leq& \limsup_{n \to \infty}\int U_{\mu}(z)\,d\mu_n(z) \notag\\
&-\liminf_{n \to \infty} \int (1-\phi(z))U_{\mu}(z)\,d\mu_n(z). \label{E:lsc_ineq}
\end{align}
For the last inequality, we use the fact that
$\liminf_{n \to \infty}(k_n+l_n) \leq \limsup_{n \to \infty}k_n+\limsup_{n \to \infty} l_n$. 
By Fubini's theorem, we see that
\[
 \int U_{\mu}(z)\,d\mu_n(z) =  \int U_{\mu_n}(z)\,d\mu(z)
= - \frac{1}{\deg P_n} \int \log \vert P_n(z) \vert \,d\mu(z)+ \frac{\log |b_n|}{\deg P_n}. 
\]
Since Theorem~\ref{T:meaure_chara} holds for an arbitrary polynomial $Q$ with integer coefficients, in particular it holds
for the polynomials $P_n$. Therefore, fixing $Q=P_n$, for all $m$ sufficiently large, $P_n$ and $P_m$ have no common factors and we get
\begin{equation}\label{E:use_poly_estim}
\limsup_{n \to \infty} \int U_{\mu}(z)\,d\mu_n(z) \leq  \lim_{n \to \infty} \frac{\log |b_n|}{\deg \alpha_n} +
\esc +\lim_{n \to \infty} \frac{\log |b_n|}{\deg P_n}.
\end{equation}
Now as $z\to \infty$,
\begin{align}
U_{\mu}(z)&=\int_{\D(0;r)}\log \frac{1}{|z-w|}\,d\mu(w) + \int_{\D(0;r)^c}\log \frac{1}{|z-w|}\,d\mu(w) \notag\\
&\geq (1-\delta)(- \log |z| + O(1))+ \int_{\D(0;r)^c}\log \frac{1}{|z-w|}\,d\mu(w). \notag 
\end{align}
For any $M>0$ and $|z|>r$, 
$$\int_{\D(0;r)^c}\log \frac{1}{|z-w|}\,d\mu(w)\geq \int_{\D(0;r)^c}\min\Big\{M,\log \frac{1}{|z-w|}\Big\}\,d\mu(w)$$
which is finite; hence, as $z\to \infty$,
$$U_{\mu}(z) \geq - \log |z| + O(1)=-\gr_{\nice}(z, \infty)+O(1) .$$
Therefore, it follows that
\begin{equation}\label{E:poten_asym}
\liminf_{n \to \infty} \int (1-\phi(z)) U_{\mu}(z) \,d \mu_n(z) 
\geq - \liminf_{n \to \infty}\frac{1}{\deg \alpha_n}\sum_{\substack{x \in \Gal (\alpha_n)\\ |x| \geq r}}\gr_{\nice}(x, \infty) 
+ \delta \cdot O(1).
\end{equation}
Since $\delta>0$ is arbitrary, using \eqref{E:use_poly_estim} and \eqref{E:poten_asym} in \eqref{E:lsc_ineq}, it follows that 
\[
I(\mu) \leq 2 \Big(\lim_{n \to \infty} \frac{\log |b_n|}{\deg \alpha_n} + \esc \Big).
\]
\end{proof}

Let $\nice \subset \C$ be compact and symmetric about the real axis with $0< \capa(\nice) <1$.
We now give the proof of the inequality in \eqref{E:lower} using the above theorems. The proof will not require $\capa(\nice) <1$ but if $\capa(\nice) \geq 1$, the inequality is obvious.

\begin{proof}[The proof of Result \ref{R:lower}]
We shall first prove the result when ${\nice}$ is regular (see Section~\ref{S:prelim}).
Let $\{\alpha_n\}$ be a sequence in $\overline \Q$ satisfying \ref{I:assumption1}--\ref{I:assumption3}.
Observe that the result follows if we show that $\lim_{n \to \infty}h_{\nice}(\alpha_n) \geq  -(1/2) \log \capa (\nice)$.
With this notation, we have 
\[
h_{\nice} ({\alpha_n}) = \frac{\log |b_n|}{\deg \alpha_n} + \frac{1}{\deg \alpha_n}  \sum_{x \in \Gal (\alpha_n)}\gr_{\nice} (x, \infty).
\]
Recall $\mu_n$ is the probability measure uniformly distributed on all conjugates
of $\alpha_n$.
By Lemma~\ref{L:tight}, there exists a convergent subsequence of $\{\mu_n\}$. We simply denote this subsequence by $\{\alpha_n\}$ and the corresponding 
sequence of measures by $\{\mu_n\}$. Let $\mu$ be the weak* limit of $\{\mu_n\}$. Note that ${\mu}$ need not be 
compactly supported.
\smallskip

Let $\mu_{\nice}$ denote the equilibrium measure of ${\nice}$. We have
$U_{\mu_{\nice}} (z)=-\gr_{\nice}(z, \infty) -\log\capa (\nice)$ for $z \in \C$. Thus we get
\begin{align}
h_{\nice} ({\alpha_n}) = & \frac{\log |b_n|}{\deg \alpha_n} + \frac{1}{\deg \alpha_n}  \sum_{x \in \Gal (\alpha_n)} \Big(\log \frac{1}{\capa (\nice)} - U_{\mu_{\nice}} (x) \Big) \notag \\
= & \log \frac{1}{\capa (\nice)} + \frac{\log |b_n|}{\deg \alpha_n} - \int U_{\mu_{\nice}} (z) \,d\mu_n(z). \notag
\end{align}
Since $\{h_{\nice}(\alpha_{n})\}$ and $\{{\log |b_{n}|}/{\deg \alpha_{n}}\}$
are both convergent, we get
\begin{align}
\lim_{n \to \infty} h_{\nice} ({\alpha_n}) =  \log \frac{1}{\capa (\nice)} + \lim_{n \to \infty} \frac{\log |b_n|}{\deg \alpha_n} - \lim_{n \to \infty} \int U_{\mu_{\nice}} (z) \,d\mu_n(z) \label{E:height_limit}.
\end{align}
The last limit exists because all other limits in \eqref{E:height_limit} exist. We now estimate the last limit
in \eqref{E:height_limit}.
By our assumption that ${\nice}$ is regular, $U_{\mu_{\nice}}$ is a continuous function.
Let $\phi$ be a non-negative compactly supported continuous function. Thus $\phi(z) U_{\mu_{\nice}} (z)$ is
bounded and continuous, and we get
\begin{align}
\int \phi(z) U_{\mu_{\nice}} \,d\mu(z) &= \lim_{n \to \infty} \int \phi(z) U_{\mu_{\nice}} (z) \,d\mu_n(z) \notag \\
%&= \lim_{n \to \infty} \Big( \int U_{\mu_{\nice}} (z) \,d\mu_n(z)-\int U_{\mu_{\nice}} (z) (1-\phi(z)) \,d\mu_n(z) \Big) \notag \\
&=\lim_{n \to \infty} \int U_{\mu_{\nice}} (z) \,d\mu_n(z)- \lim_{n \to \infty} \int (1-\phi(z)) U_{\mu_{\nice}} (z) \,d\mu_n(z). \label{E:conti_equa}
%&\geq \lim_{n \to \infty} \int U_{\mu_{\nice}} (z) \,d\mu_n(z)- \lim_{n \to \infty} \int_{{\rm {supp}}(\phi)^c} \log \frac{1}{\capa (\nice)} - \gr_{\nice} (z, \infty) \,d\mu_n(z). \notag 
\end{align}
Fix $\delta>0$. Since $U_{\mu_{\nice}} (z)= -\log |z| + O(1)$ as $z \to \infty$,
by Result~\ref{R:finite_energy}, there exists $r>0$ such that if $\phi$ is a cut-off function as in Theorem~\ref{T:meaure_chara} so that $0\leq \phi \leq 1$ with $\phi\equiv 1$ on $\D(0;r)$ for sufficiently large $r$, then
\begin{equation}\label{E:delta_ineq}
\int \phi(z) U_{\mu_{\nice}} (z)\,d\mu(z) \leq (1+ \delta) \int U_{\mu_{\nice}} (z) \,d\mu(z).
\end{equation}
As in the proof of Theorem~\ref{T:meaure_chara}, increasing $r$ if necessary,
we have $\mu (\D(0;r)^c) < \delta$ and $\mu_n (\D(0;r)^c) < \delta$ for sufficiently large $n$.
Since $U_{\mu_{\nice}} (z)=-\gr_{\nice}(z, \infty) -\log\capa (\nice)$, for large $r$, as in (\ref{E:poten_asym}), 
we also have
\begin{equation}\label{E:potential_limit}
\lim_{n \to \infty} \int (1-\phi(z)) U_{\mu_{\nice}} (z) \,d\mu_n(z) \leq
- \liminf_{n \to \infty} \frac{1}{\deg \alpha_n}\sum_{\substack{x \in \Gal (\alpha_n)\\ |x| \geq r}} \gr_{\nice} (x, \infty) + \delta \cdot O(1).
\end{equation}
Since $\delta>0$ is arbitrary, \eqref{E:delta_ineq} and \eqref{E:potential_limit} together with \eqref{E:conti_equa} give us
\begin{equation}
\lim_{n \to \infty} \int U_{\mu_{\nice}} (z) \,d\mu_n(z) \leq \int U_{\mu_{\nice}} (z)\,d\mu(z)
- \lim_{r \to \infty}\bigl(\liminf_{n \to \infty} \frac{1}{\deg \alpha_n}\sum_{\substack{x \in \Gal (\alpha_n)\\ |x| \geq r}} \gr_{\nice} (x, \infty)\bigr). \notag
\end{equation}
The limit on the right hand side of the last expression is the escape rate $\esc$ of the sequence $\{\alpha_n\}$
as in Theorem~\ref{T:energy_upperbound}.
By invoking Result~\ref{R:energy_convex} and Theorem~\ref{T:energy_upperbound}, it follows that
\begin{align}
\lim_{n \to \infty} \int U_{\mu_{\nice}} (z) \,d\mu_n(z)
 &\leq \frac{I(\mu_{\nice})+I(\mu)}{2} - \esc \notag\\
 &\leq \frac{1}{2}\log \frac{1}{\capa (\nice)} +  \lim_{n \to \infty} \frac{\log |b_n|}{\deg \alpha_n}. \notag
\end{align}
Using this in \eqref{E:height_limit}, we get
\begin{align}
\lim_{n \to \infty} h_{\nice} ({\alpha_n}) \geq \log \frac{1}{\capa (\nice)} + \lim_{n \to \infty} \frac{\log |b_n|}{\deg \alpha_n} 
-\frac{1}{2}\log \frac{1}{\capa (\nice)} -\lim_{n \to \infty} \frac{\log |b_n|}{\deg \alpha_n} \geq \frac{1}{2}\log \frac{1}{\capa (\nice)}. \notag
\end{align}
%This finishes the proof when $\nice$ is regular.
Since the sequence $\{\alpha_n\}$ is arbitrary, the result follows 
when $\nice$ is regular.
\smallskip

If $\nice$ is not regular, we consider a sequence $\{\nice_m\}$ of regular compact sets symmetric 
about the real axis such that
$\nice_{m+1} \subset \nice_{m}$ for every $m \in \N$ and $\nice = \cap_m \nice_m$. 
For example, we can take $\nice_m=\{z\in \C: {\rm dist}(z,\nice)\leq 1/m\}$. Note that
$\capa(\nice_{m}) \downarrow \capa(\nice)$. Also, note that
$\gr_{\nice_{m}}(z, \infty) \leq \gr_{\nice}(z, \infty)$ for every $m \in \N$ and $z \in \C$. Therefore,
for any $\alpha\in \overline \Q$, we have $h_{\nice_m} ({\alpha}) \leq h_{\nice} ({\alpha})$
for every $m \in \N$. 
Let $\{\alpha_n\}$ be a sequence in $\overline \Q$ such that $\deg \alpha_n \to \infty$.
Since the sets $\nice_m$ are regular, it follows that
$\liminf_{n \to \infty} h_{\nice} ({\alpha_n}) \geq \liminf_{n \to \infty} h_{\nice_m} ({\alpha_n}) \geq  
-(1/2) \log \capa (\nice_m)$ for every $m \in \N$. Thus
\[
\liminf_{n \to \infty} h_{\nice} ({\alpha_n}) \geq \frac{1}{2} \log \frac{1}{\capa (\nice)}.
\]
\end{proof}

We give an application of the inequality in \eqref{E:lower} to the growth of the leading coefficient of certain
polynomial sequences.
Issai Schur showed %in \cite{Schur:Uber18} 
that restricting the location of zeros of polynomials in $\Z[z]$ with
simple zeros to $[-1,1]$ leads to geometric growth of the leading coefficients of these polynomials.

\begin{result}[Schur \cite{Schur:Uber18}, Satz VII]\label{Schur}
Let $\{Q_n\}$ be a sequence of 
polynomials in $\Z[z]$ with simple zeros such that $\deg Q_n \to \infty$ and all roots of $Q_n$ are in $[-1, 1]$.  
Then 
\[
\liminf_{n \to \infty} \vert {\rm lead}(Q_n) \vert^{1/\deg Q_n} \geq \sqrt{2},
\]
where ${\rm lead}(Q_n)$ is the leading coefficient of $Q_n$.
\end{result}

For $k \in \N$, we denote by $\Z^k[z]$ the set of polynomials in $\Z[z]$ 
whose zeroes are repeated at most $k$ times. 
A version of \eqref{E:lower} is valid in a 
more general setting of a sequence 
of polynomials in $\Z^k[z]$, using the notion of Mahler measure of a polynomial 
in place of the height of an algebraic number.
As an application, we get a lower bound on the growth of the leading coefficients
of a sequence of polynomials in $\Z^k[z]$ whose roots are eventually near $\nice$.

\begin{corollary}\label{C:leading_growth}
Let $\nice \subset \C$ be compact and symmetric about the real axis with $0< \capa(\nice) <1$.
Let $k \in \N$.
Assume that $\{Q_n\}$ is a sequence of polynomials in $\Z^k[z]$ such that $\deg Q_n \to \infty$ 
and
$$\frac{1}{\deg Q_n}\sum_{Q_n(x)=0}\gr_{\nice} (x, \infty)\to 0$$ 
(e.g., if all roots of $\{Q_n\}$ eventually belong to any neighborhood of $\nice$). Then
\[
\liminf_{n \to \infty} \vert {\rm lead}(Q_n) \vert^{1/\deg Q_n} \geq \frac{1}{\sqrt{\capa(\nice)}}.
\]
\end{corollary}

\noindent{For the interval $[-1,1]$ we have $\capa([-1,1])=1/2$; compare with Result \ref{Schur}. 
Pritsker (Proposition~1.7 in \cite{Pritsker:small05}) proved a similar bound with the assumption that the roots lie in the set and with
an additional hypothesis of irreducibility.}
\smallskip

Recall the definition of the Mahler measure
of a polynomial was given in Subsection~\ref{SS:height}; we will also make use of the relation (\ref{mahler}).

\begin{lemma}\label{L:simple_roots}
Let $\nice$ be as in the statement of Corollary~\ref{C:leading_growth}.
Let $k \in \N$.
Let $\{Q_n\}$ be a sequence of polynomials in $\Z^k[z]$ such that $\deg Q_n \to \infty$.
Then we have
\[
\liminf_{n \to \infty}\frac{1}{\deg Q_n}\log M_{\nice} (Q_n) \geq  \frac{1}{2} \log \frac{1}{\capa(\nice)}.
\]
\end{lemma}

\begin{proof}
Fix $\epsilon>0$. As noted in Section~\ref{S:intro},
the Northcott property of $h_{\nice}$ together with \eqref{E:lower} imply that there are only finitely
many $\alpha \in \overline \Q$ such that
$h_{\nice} (\alpha) \leq -(1/2) \log \capa (\nice) - \epsilon$. Thus
there are only finitely many
{\textbf{irreducible}} polynomials $P\in \Z[z]$ satisfying
\[
\frac{1}{\deg P}\log M_{\nice} (P)  \leq \frac{1}{2} \log \frac{1}{\capa(\nice)} - \epsilon.
\]
Let $\mathcal P$ denote the set of such polynomials.
\smallskip

Let $\{Q_n\}$ be a sequence of polynomials in $\Z^k[z]$ such that $\deg Q_n \to \infty$.
Write, for every $n$, $Q_n= Q^1_n Q^2_n$, where
$Q^1_n$ is the product of factors of $Q_n$ that are also in $\mathcal P$ and $Q^2_n$ is the product of
the remaining factors of $Q_n$. Thus, from (\ref{mahler}), 
\begin{equation}\label{E:conve_combi}
\frac{1}{\deg Q_n}\log M_{\nice} (Q_n)= \frac{\deg Q^1_n}{\deg Q_n} \Big(\frac{\log M_{\nice} (Q^1_n)}{\deg Q^1_n}\Big)
+ \frac{\deg Q^2_n}{\deg Q_n}\Big(\frac{\log M_{\nice} (Q^2_n)}{\deg Q^2_n}\Big).
\end{equation}
Similar to \eqref{E:conve_combi},
${\log M_{\nice} (Q^1_n)}/{\deg Q^1_n}$ can be further written as a convex combination
using its irreducible sub-factors. Since these sub-factors are from the set $\mathcal P$, we have
\[
\frac{\log M_{\nice} (Q^1_n)}{\deg Q^1_n} \leq \frac{1}{2} \log \frac{1}{\capa(\nice)} - \epsilon.
\]
Also, ${\log M_{\nice} (Q^2_n)}/{\deg Q^2_n}$ can be further written as a convex combination
using its irreducible sub-factors. Therefore,
\[
\frac{\log M_{\nice} (Q^2_n)}{\deg Q^2_n} > \frac{1}{2} \log \frac{1}{\capa(\nice)} - \epsilon.
\]
Since the zeroes of each $Q_n$ are repeated at most $k$ times, $\{\deg Q^1_n\}$ is bounded.  Therefore, by 
\eqref{E:conve_combi}, there exists $N \in \N$ such that for every $n \geq N$, we have
\[
\frac{1}{\deg Q_n}\log M_{\nice} (Q_n) >  \frac{1}{2} \log \frac{1}{\capa(\nice)} - \epsilon- \frac{\epsilon}{2}.
\]
Since $\epsilon>0$ is arbitrary, the result follows.
\end{proof}

Corollary~\ref{C:leading_growth} follows from Lemma~\ref{L:simple_roots} (if $\nice$ is not regular, 
for the case when all roots of $\{Q_n\}$ eventually belong to any neighborhood of $\nice$, 
we use an argument as in the last part of the proof of \eqref{E:lower}).

\begin{remark}
We can show that \eqref{E:lower} also holds with a slightly different notion of height
(which occurs in the literature; e.g., in \cite{rumely:bilu99}). Given a compact set $\nice$ in $\C$ with $\capa(\nice)>0$,  
let $\Omega_{\nice}$ denote the unbounded component of $\C \setminus \nice$.
We consider the height of $\alpha \in \overline \Q$ with respect to $\nice$ defined as
\[
\widehat h_{\nice} ({\alpha}) =  \frac{1}{\deg\alpha} \Big(\log |b_{\alpha}|+\sum_{x \in \Gal (\alpha) \cap \Omega_{\nice}}\gr_{\nice} (x, \infty)\Big),
\]
where recall $b_{\alpha}$ is the leading coefficient of the irreducible polynomial $P_{\alpha} \in \Z[z]$. 
If $\Gal (\alpha) \cap \Omega_{\nice}=\emptyset$ then we set the sum equal to zero.
Thus $\widehat h_{\nice} ({\alpha}) = 0$ if and only if $\alpha$ is an algebraic integer all of whose conjugates lie in $\C \setminus \Omega_{\nice}$ 
as $\gr_{\nice} (z, \infty) >0$ for $z \in \Omega_{\nice}$.%
\smallskip

Since $\gr_{\nice} (z, \infty) \geq 0$ for $z \in \C$, we have $\widehat h_{\nice} ({\alpha}) \leq h_{\nice} ({\alpha})$.
Furthermore, we have equality for regular sets $\nice$ because for such sets, $\gr_{\nice} (z, \infty)=0$ 
for $z \in \C \setminus \Omega_{\nice}$. Thus \eqref{E:lower} holds when $\nice$ is regular 
with $h_{\nice}$ replaced by $\widehat h_{\nice}$. By
considering a decreasing sequence of regular sets
as in the last part of the proof of \eqref{E:lower}, we see that 
\eqref{E:lower} holds with 
$h_{\nice}$ replaced by $\widehat h_{\nice}$.
Similarly, Lemma~\ref{L:simple_roots} holds with $M_{\nice} $ replaced by $\widehat M_{\nice}$ where 
for $P_n(z)=a_n\prod_{j=1}^n(z-\alpha_j)$, 
\[
\widehat M_{\nice}(P_n):=|a_n|\exp\Big[\sum_{\alpha_j \in \Omega_{\nice}}  \gr_{\nice}(\alpha_j, \infty)\Big].
\]
\end{remark}

 \section{Further discussion}\label{S:remarks}
There remain several relevant questions related to \eqref{E:lower} and Theorem~\ref{T:existence}.
\begin{enumerate}
\item Can the lower bound $-(1/2)\log \capa(\nice)$ in \eqref{E:lower} be improved?
\item Does there exist a sequence of distinct algebraic integers $\{\alpha_n\}$ such that \ref{E:most} holds for every neighborhood
$U$ of $\nice$ and $h_{\nice}(\alpha_n) \to L <- \log {\capa(\nice)}$ as $n \to \infty$?
\item Let $\{\alpha_n\}$ be a  sequence of distinct algebraic integers
such that $\{\alpha_n\}$ satisfy \eqref{E:most} for any neighborhood $U$ of $\nice$ and
$h_{\nice}(\alpha_n) \to - \log {\capa(\nice)}$ as $n \to \infty$. Does the corresponding sequence of 
probability measures $\{\mu_n\}$ converge to the equilibrium measure $\mu_{\nice}$? If not, what other measures $\mu$ can be realized in this fashion?
It follows from \cite[Th\'eor\`eme~2]{FRL:eqpphd06} that if $h_{\nice}(\alpha_n) \to - (1/2)\log {\capa(\nice)}$
then the sequence $\{\mu_n\}$ converges to the equilibrium measure $\mu_{\nice}$.
\end{enumerate}

\noindent Note that, in the case where $\nice$ is a compact set symmetric about the real axis with 
$\capa(\nice)\geq 1$, Orloski and Sardari \cite{OrlSar:ldcai23} have recently characterized the so-called 
{\it arithmetic probability measures} on $\Sigma$, i.e., probability measures $\mu$ on $\nice$ which 
are weak* limits of measures $\{\mu_n\}$ coming from $\{\alpha_n\}$ where all all conjugates 
of $\alpha_n$ eventually lie inside any neighborhood of $\nice$. In our setting when $c(\nice)< 1$, 
necessarily in the second question above we consider the less restrictive hypothesis (\ref{E:most}).

%\section*{Acknowledgments}
%\noindent{The author would like to thank **** for their helpful comments 
%on an earlier draft of this paper.}
%\medskip

\end{document}